\numberwithin{equation}{section}
\newtheorem{theorem}{Theorem}[section]
\newtheorem{lemma}[theorem]{Lemma}
\theoremstyle{definition}
\newcommand{\A}{{\mathcal A}}
\newcommand{\es}{{\mathcal S}}
\newcommand{\D}{{\mathbb D}}
\newcounter{minutes}\setcounter{minutes}{\time}
\newcounter{hours}\setcounter{hours}{\time}
\begin{document}

\title[$p$-valent starlike functions]
{A note on a class of $p$-valent starlike functions of order beta}

\author{Swadesh Sahoo${}^*$}
\address{Swadesh Sahoo, Discipline of Mathematics,
Indian Institute of Technology Indore,
Indore 452 017, India}
\email{swadesh@iiti.ac.in}

\author{Navneet Lal Sharma}
\address{Navneet Lal Sharma, Discipline of Mathematics,
Indian Institute of Technology Indore,
Indore 452 017, India}
\email{sharma.navneet23@gmail.com}

\thanks{${}^*$ The corresponding author}

\begin{center}
\texttt{File:~\jobname .tex, printed: \number\day-\number\month-\number\year,
\thehours.\ifnum\theminutes<10{0}\fi\theminutes}
\end{center}

\thispagestyle{empty}


\begin{abstract}
In this paper we obtain sharp coefficient bounds for certain $p$-valent starlike functions of order $\beta$, $0\le \beta<1$.
Initially this problem was handled by Aouf in {\em M.K. Aouf, On a class of $p$-valent starlike functions of order $\alpha$,
Internat. J. Math. $\&$ Math. Sci. 1987;10:733--744}. We pointed out that the proof given by Aouf was incorrect and a correct proof is
presented in this paper.   

\bigskip
\noindent
{\bf 2010 Mathematics Subject Classification}. 
Primary 30C45; Secondary 30C55

\smallskip
\noindent
{\bf Key words.} 
$p$-valent analytic functions, starlike functions, differential subordination
\end{abstract}

\maketitle
\section{Introduction}
It is well-known that each univalent functions of the form
$$f(z)=z+\sum_{n=2}^\infty a_nz^n
$$
in the open unit disk $\D:= \{z\in \mathbb{C}: |z|<1 \}$ has the property $|a_2|\le 2$, with
equality occurring only for rotations of the Koebe function
$$k(z)=\frac{z}{(1-z)^2}=z+\sum_{n=2}^\infty nz^n.
$$
This suggests the famous conjecture of Bieberbach \cite{Bie16}, first proposed in 1916.
This states that if $f$ in the above form is univalent in $\mathbb{D}$ then $|a_n|\le n$ 
for all $n\ge 2$. Initially this conjecture was proved in many special cases 
and has a long history. It was finally settled after several years by De Branges 
\cite{Bra85} in 1985. For basic theory of Bieberbach
conjecture problem for number of classes of univalent functions we refer to
\cite{Dur83,Goo83}. Part of this development, it was not generalized to the class of $p$-valent functions until 1948. The initiative was first taken by Goodman, see \cite{Goo48}.
Similar problem for many other classes of $p$-valent functions can be found, for instance in \cite{Aouf87,Gol74,PT83}.
In this paper we consider certain classes of $p$-valent functions in the unit disk
and prove Bieberbach's conjecture for these functions. 

For a natural number $p$, let $\A_{p}$ denote the class of functions of the form
\begin{equation}\label{eq1}
f(z)=z^p+\sum_{n=1}^\infty a_{n+p} z^{n+p}
\end{equation}
which are analytic and $p$-valent in the open unit disk.

Let $g(z)$ and $f(z)$ be analytic in $\D$. A function $g(z)$ is called to be subordinate to 
$f(z)$ if there exists an analytic function $\phi(z)$ in $\D$ with $\phi(0)=0$ and 
$|\phi(z)|<1\, (z\in \D)$ such that $g(z)=f(\phi(z))$.
We denote this subordination by $g(z)\prec f(z)$\, (see \cite{MM2000}).

Let $\es _p(A,B,\beta)$ denote the class of functions $f(z)\in \A_p$ satisfying
\begin{equation}\label{eq2}
\frac{zf'(z)}{f(z)} \prec \frac{p+\big[pB+(A-B)(p-\beta)\big]z}{1+Bz}, \quad z\in \D, ~0\le \beta<1,
\end{equation}
where $A$ and $B$ have the restriction $-1\le B<A\le 1.$ The class $\es _p(A,B,\beta)$ was considered by Aouf in \cite{Aouf87}.
As a special case, we see that
$$\es_p(1,-1,\beta)=\es_p(\beta),\, \es_1(\beta)=\es ^*(\beta),\, \es_p(0)=\es_p ~\mbox{and } \es_1(A,B,0)=\es ^*(A,B).
$$
Note that $\es_p(\beta)$, the class of $p$-valent starlike functions of order $\beta$, 
was studied by Goluzina in \cite{Gol74}; $\es ^*(\beta)$, the class of starlike functions of 
order $\beta$ was introduced by Robertson in \cite{Rob36}; $\es_p$, the usual class of $p$-valent starlike functions; and
$\es ^*(A,B)$ was introduced by Janowski in \cite{Jan73}.

Aouf estimated the coefficient bounds for the functions from the class $\es _p(A,B,\beta)$ in \cite{Aouf87} in which 
the proof is found to be incorrect. In this paper, we provide a correct proof.
\section{\bf Main result}
The following Lemma is obtained by Goel and Mehrok:
\begin{lemma}\label{lem1}\cite[Theorem~1]{GM81}
 Let $-1\le B<A\le 1$ and $f\in \es ^*(A,B)$. Then 
\begin{equation}\label{eq1-lem1}
  |a_2|\leq A-B;
\end{equation}
for $A-2B\leq 1,\, n\geq 3,$
\begin{equation}\label{eq2-lem1}
|a_n|\leq \frac{A-B}{n-1};
\end{equation}
and for $A-(n-1)B>(n-2),\, n\geq 3$,
\begin{equation}\label{eq3-lem1}
|a_n|\leq \frac{1}{(n-1)\ !}\prod_{j=2}^{n}(A-(j-1)B).
\end{equation}
The equality signs in $(\ref{eq1-lem1})$ and $(\ref{eq2-lem1})$ are attained for the functions
\begin{equation}\label{eq5-lem1}
k_{n,A,B}(z)= \left \{
\begin{array}{ll}
z(1+B\delta z^{n-1})^{(A-B)/(n-1)B}, & \mbox{ if } B\neq 0;\\[1mm] 
z~\rm{exp} \left(\frac{A\delta z^{n-1}}{n-1}\right), & \mbox{ if } B=0,
\end{array} \quad |\delta|=1,\right.
\end{equation}
and in $(\ref{eq3-lem1})$ equality is attained for the functions
\begin{equation}\label{eq6-lem1}
 k_{A,B}(z)= \left \{
\begin{array}{ll}
z(1+B\delta z)^{(A-B)/B}, & \mbox{ if } B\neq 0;\\[1mm]
ze^{Az\delta}, & \mbox{ if } B=0,
\end{array} \quad |\delta|=1.\right.
\end{equation}
\end{lemma}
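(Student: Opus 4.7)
The plan is to convert the subordination defining $\es^*(A,B)$ into a functional equation via a Schwarz function, read off a linear recursion for the Taylor coefficients of $f$, and then bound the $a_n$ by induction in each of the two parameter regimes.

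By definition, $f \in \es^*(A,B)$ means there exists an analytic Schwarz function $\phi(z) = \sum_{k\ge 1} c_k z^k$ on $\D$, with $\phi(0)=0$ and $|\phi(z)|<1$, such that $zf'(z)(1+B\phi(z)) = f(z)(1+A\phi(z))$; equivalently,
\[ zf'(z)-f(z) = \phi(z)\bigl(Af(z)-Bzf'(z)\bigr). \]
Equating coefficients of $z^n$ on both sides produces the recursion
\[ (n-1)a_n \;=\; (A-B)c_{n-1} + \sum_{k=1}^{n-2}(A-(n-k)B)\,a_{n-k}\,c_k \qquad (n\ge 2). \]
For $n=2$ this yields $a_2 = (A-B)c_1$, and Schwarz's lemma $|c_1|\le 1$ gives $|a_2|\le A-B$, establishing $(\ref{eq1-lem1})$; sharpness follows by taking $k_{2,A,B}$ with any $|\delta|=1$.

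For $(\ref{eq2-lem1})$ under the hypothesis $A-2B\le 1$, I would first verify by algebra (using $-1\le B<A\le 1$) that $|A-mB|\le m-1$ for every $m\ge 2$, and then induct on $n$. The naive bound $|c_k|\le 1$ is insufficient to close the induction, so one must exploit the Schur--Pick constraints tying the magnitudes of the $c_k$ together, or apply Clunie's method: from $|\phi(z)|\le 1$ on $|z|=r$ and Parseval's identity applied to the functional equation, one obtains
\[ \sum_{n\ge 2}\bigl[(n-1)^2-(A-nB)^2\bigr]|a_n|^2 \;\le\; (A-B)^2, \]
whose summands are all nonnegative under the hypothesis, and from which the target estimate $|a_n|\le (A-B)/(n-1)$ can be extracted after a closing argument. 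Sharpness is witnessed by $k_{n,A,B}$, whose associated Schwarz function has only a single nonzero coefficient $c_{n-1}$.

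For $(\ref{eq3-lem1})$ under the opposite regime $A-(n-1)B>n-2$, I would use the crude estimate $|c_k|\le 1$ in the recursion to obtain
\[ (n-1)|a_n| \;\le\; (A-B) + \sum_{k=1}^{n-2}(A-(n-k)B)\,|a_{n-k}|, \]
and then induct on $n$. Setting $M_n=\frac{1}{(n-1)!}\prod_{j=2}^n(A-(j-1)B)$, one checks that $M_n$ satisfies this inequality with equality by recognizing $M_n$ as the Taylor coefficients of $k_{A,B}(z)=z(1+Bz)^{(A-B)/B}$ via the binomial series, which simultaneously establishes sharpness. The parameter condition $A-(n-1)B>n-2$ ensures positivity of each factor $A-jB$ and makes the induction close cleanly. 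The main obstacle is $(\ref{eq2-lem1})$: threading the joint Schwarz-coefficient constraints through the recursion to recover the clean bound $(A-B)/(n-1)$ without the slack introduced by pure $|c_k|\le 1$ estimates is the technical crux of the proof.
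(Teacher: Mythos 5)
Your treatment of (\ref{eq1-lem1}) and (\ref{eq2-lem1}) follows essentially the route the paper takes: the paper proves the $p$-valent generalization (Theorem~\ref{thm1}) by exactly this Schwarz-function reduction together with Clunie's $L^2$ method, and Lemma~\ref{lem1} is the specialization $p=1$, $\beta=0$. One caveat: the infinite-sum inequality you state only yields $\big[(n-1)^2-(A-nB)^2\big]|a_n|^2\le (A-B)^2$ for each fixed $n$, which is weaker than (\ref{eq2-lem1}); what you need is the truncated Clunie inequality $\sum_{m=2}^{n}(m-1)^2|a_m|^2\le (A-B)^2+\sum_{m=2}^{n-1}(A-mB)^2|a_m|^2$, from which dropping the nonnegative middle terms gives $(n-1)^2|a_n|^2\le(A-B)^2$ at once --- presumably this is your ``closing argument'', but it should be said.

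The genuine gap is in your proof of (\ref{eq3-lem1}). Feeding the crude bounds $|c_k|\le 1$ into the recursion gives $(n-1)|a_n|\le (A-B)+\sum_{m=2}^{n-1}(A-mB)|a_m|$, and this induction does \emph{not} close: the quantities $M_n=\frac{1}{(n-1)!}\prod_{j=2}^{n}(A-(j-1)B)$ do not satisfy this inequality with equality. Already for $n=3$ (regime $A-2B>1$), the inductive step yields $2|a_3|\le (A-B)+(A-2B)(A-B)=(A-B)(1+A-2B)$, which exceeds the target $2M_3=(A-B)(A-2B)$ by the positive amount $A-B$; so this method provably cannot reach the sharp bound. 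The source of the error is your identification of $M_n$ with a solution of the full-sum recursion: the Taylor coefficients of $k_{A,B}$ satisfy the \emph{two-term} recursion $(n-1)b_n=(A-(n-1)B)\,b_{n-1}$, because the associated Schwarz function is $\phi(z)=\delta z$ (only $c_1\neq 0$), not a function with all $c_k=1$ (that would be $z/(1-z)$, which is not a Schwarz function). This loss is exactly what Clunie's method is designed to avoid, and it is how the paper proceeds in this regime as well: in the $L^2$ inequality the $k$-th summand is $\big[(A-kB)^2-(k-1)^2\big]|a_k|^2$ --- it carries the negative correction $-(k-1)^2|a_k|^2$ that your linear estimate throws away --- and the induction is closed by the exact telescoping identity (the $p=1$, $\beta=0$ case of (\ref{eq9-thm1}))
\begin{equation*}
(n-1)^2\prod_{j=1}^{n-1}\frac{(A-jB)^2}{j^2}=(A-B)^2+\sum_{k=2}^{n-1}\Big(\big[A-kB\big]^2-(k-1)^2\Big)\prod_{j=1}^{k-1}\frac{(A-jB)^2}{j^2},
\end{equation*}
which telescopes precisely because of this difference-of-squares structure. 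To repair your argument you must either switch to this $L^2$ identity, or genuinely exploit joint coefficient constraints such as $|c_2|\le 1-|c_1|^2$ (as you yourself anticipated doing for (\ref{eq2-lem1})); the plain $|c_k|\le 1$ route fails.
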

However, a $p$-valent analog of Lemma~\ref{lem1} was wrongly proven by Aouf in the following form:

\medskip
\noindent
{\bf Theorem~A.~}\cite[Theorem~3]{Aouf87} 
{\em Let $-1\le B<A\le 1$ and $p\in \mathbb{N}$. If $f(z)=z^p+\sum_{n=p+1}^\infty a_{n} z^{n}\in \es _p(A,B,\beta)$, then
$$|a_n|\leq \prod_{j=0}^{n-p-1}\frac{|(B-A)(p-\beta)+Bj|}{j+1}
$$
for $n\ge p+1$, and these bounds are sharp for all admissible $A,B,\beta$ and for each $n$.
}

We now give the correct form of the statement stated in Theorem~A and it's proof.
\begin{theorem}\label{thm1}
Let $-1\le B<A\le 1$ and $p\in \mathbb{N}$. If $f(z)\in \es _p(A,B,\beta)$ is in the form $(\ref{eq1})$, then we have
\begin{equation}\label{eq1-thm1}
  |a_{p+1}|\leq (A-B)(p-\beta);
\end{equation}
for $A(p-\beta)-B(p-\beta-1)\le 1$ (or $A(p-\beta)-B(n-\beta-1)\le (n-p-1)),\, n\geq p+2,$
\begin{equation}\label{eq2-thm1}
|a_n|\leq \frac{(A-B)(p-\beta)}{n-p};
\end{equation}
and for $A(p-\beta)-B(n-\beta-1)>(n-p-1),\, n\geq p+2$,
\begin{equation}\label{eq3-thm1}
|a_n|\leq \prod_{j=1}^{n-p}\frac{(A(p-\beta)-B(p-\beta+j-1))}{j}.
\end{equation}
The inequalities $(\ref{eq1-thm1}), (\ref{eq2-thm1})$ and $(\ref{eq3-thm1})$ are sharp.

\end{theorem}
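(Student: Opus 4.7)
The plan is to reduce Theorem~\ref{thm1} to Lemma~\ref{lem1} via the substitution $h(z) := f(z)/z^{p-1}$. Since $f(z) = z^p + \sum_{n\ge p+1} a_n z^n$ vanishes to order exactly $p$ at the origin, $h$ is analytic in $\D$ with $h(0)=0$, $h'(0)=1$, and
\[
h(z) = z + \sum_{m=2}^\infty a_{m+p-1}\, z^m,
\]
so the $m$-th Taylor coefficient of $h$ is precisely $a_{m+p-1}$. Differentiating $\log h(z) = \log f(z) - (p-1)\log z$ gives $zh'(z)/h(z) = zf'(z)/f(z) - (p-1)$; subtracting $p-1$ from the right-hand side of $(\ref{eq2})$ and simplifying the rational function yields
\[
\frac{zh'(z)}{h(z)} \prec \frac{1 + A'z}{1 + Bz}, \qquad A' := B + (A-B)(p-\beta).
\]
Since $A' - B = (A-B)(p-\beta) > 0$, $h$ belongs (formally) to $\es^*(A',B)$ in the Janowski sense.

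With this reduction, I apply Lemma~\ref{lem1} to $h$. The bound $(\ref{eq1-lem1})$ applied to the second Taylor coefficient of $h$ reads $|a_{p+1}| \le A'-B = (A-B)(p-\beta)$, giving $(\ref{eq1-thm1})$. For $n \ge p+2$, set $m = n-p+1 \ge 3$; then $(\ref{eq2-lem1})$ becomes $|a_n| \le (A'-B)/(m-1) = (A-B)(p-\beta)/(n-p)$ under the Goel--Mehrok hypothesis, which upon substituting $A' = B + (A-B)(p-\beta)$ translates into the two alternative forms in the hypothesis of $(\ref{eq2-thm1})$. Finally, using $A' - (j-1)B = (A-B)(p-\beta) - (j-2)B$ and re-indexing $j \mapsto j-1$, the product bound $(\ref{eq3-lem1})$ for $h$ transforms into
\[
|a_n| \le \prod_{j=1}^{n-p}\frac{(A-B)(p-\beta) - (j-1)B}{j} = \prod_{j=1}^{n-p}\frac{A(p-\beta) - B(p-\beta+j-1)}{j},
\]
which is $(\ref{eq3-thm1})$. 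Sharpness is inherited from Lemma~\ref{lem1}: the extremal functions $k_{m,A',B}$ and $k_{A',B}$ pull back through $f(z) = z^{p-1}h(z)$ to the explicit $p$-valent extremals $f(z) = z^p(1+B\delta z^{n-p})^{(A-B)(p-\beta)/((n-p)B)}$ for $(\ref{eq1-thm1})$ and $(\ref{eq2-thm1})$, and $f(z) = z^p(1+B\delta z)^{(A-B)(p-\beta)/B}$ for $(\ref{eq3-thm1})$ (with the obvious exponential modifications when $B = 0$, and $|\delta|=1$).

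The main technical subtlety is that for $p \ge 2$ the parameter $A' = B + (A-B)(p-\beta)$ can exceed $1$, so $h$ lies outside the strict range $-1 \le B < A \le 1$ assumed in Lemma~\ref{lem1}. This is not a genuine obstacle, however: the Clunie-type proof of the Goel--Mehrok lemma rests on the identity $zh' - h = \omega(z)\bigl((A'-B)z + \sum_{m\ge 2}[A'-(m-1)B]\,b_m z^m\bigr)$ with $\omega$ a Schwarz function, followed by Parseval's identity on circles $|z| = r < 1$ together with truncation and the passage $r \to 1^-$. This argument uses only $B < A'$ and never the upper bound $A' \le 1$, so Lemma~\ref{lem1} extends verbatim to arbitrary $A' > B$. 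Guaranteeing this extension is presumably the step that was mishandled by Aouf and must be carried out carefully here.
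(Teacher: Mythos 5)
Your reduction is correct, and it is genuinely different in packaging from the paper's argument. The paper never passes to $h(z)=f(z)/z^{p-1}$: it runs Clunie's method directly on $f$, derives the recursion (\ref{eq7-thm1}), and then performs the sign analysis and a double induction by hand. Your substitution converts the problem into the Janowski setting with the shifted parameter $A'=B+(A-B)(p-\beta)$, and the translations check out: $A'-B=(A-B)(p-\beta)$ gives (\ref{eq1-thm1}); $A'-(m-1)B>m-2$ with $m=n-p+1$ is exactly $A(p-\beta)-B(n-\beta-1)>n-p-1$; and the re-indexed product is (\ref{eq3-thm1}). The extremal functions you exhibit are also correct (indeed, with cleaner indexing than the paper's, whose extremal for (\ref{eq2-thm1}) is written with $z^{n-1}$ where $z^{n-p}$ is what matches the theorem's labeling of coefficients). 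What your route buys is transparency: it shows Theorem \ref{thm1} is literally a parameter shift of Lemma \ref{lem1}, which also explains the paper's closing remark that the theorem collapses to the lemma at $p=1$, $\beta=0$.

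Two caveats. First, the crux of your proof is the sentence claiming that Lemma \ref{lem1} ``extends verbatim'' to $A'>1$. The claim is true, and your stated reason (the Clunie--Parseval truncation argument uses only $A'>B$) is the right one, but the extension also requires re-checking the case analysis: that $\big(A'-mB\big)^2-(m-1)^2$ has the sign of $A'-mB-(m-1)$, which needs $A'-mB+m-1>0$ (valid since $B<1$ and $A'>B$), and that $A'-mB-(m-1)$ is non-increasing in $m$ (valid since $B\ge-1$), together with the induction identity. None of these steps uses $A'\le1$, so your plan goes through; but carrying them out is precisely the content of the paper's proof, so a complete write-up cannot simply cite Goel--Mehrok outside their stated hypotheses --- it must redo their argument in the extended range, which is what the paper does (in the variable $f$ rather than $h$). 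Also, your displayed Clunie identity has an off-by-one: the coefficient of $b_mz^m$ should be $A'-mB$, not $A'-(m-1)B$ (your subsequent use of the lemma's product $\prod_{j=2}^n\big(A'-(j-1)B\big)$ is nevertheless correct). Second, your hypothesis translation is more revealing than you note: $A'-2B\le1$ is equivalent to $A(p-\beta)-B(p-\beta+1)\le1$, which matches the parenthetical condition in the theorem and the condition actually invoked in the paper's proof, but \emph{not} the first-stated condition $A(p-\beta)-B(p-\beta-1)\le1$; the two differ by $2B$ and are inequivalent unless $B=0$. So the theorem statement carries a sign typo, which your reduction in fact exposes.
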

\begin{proof}
Let $f(z)\in \es _p(A,B,\beta)$. By the relation~(\ref{eq2}) we can guarantee 
an analytic function $\phi:\D\rightarrow \overline{\D}$ with $\phi(0)=0$ such that
$$\frac{zf'(z)}{f(z)}=\frac{p+\big[pB+(A-B)(p-\beta)\big]\phi(z)}{1+B\phi(z)},
$$
i.e.
$$zf'(z)-pf(z)=\big[(pB+(A-B)(p-\beta))f(z)-Bzf'(z)\big]\phi(z).
$$
Substituting the series expansion~(\ref{eq1}), of $f(z)$, and canceling the factor $z^p$ on both sides, we obtain
$$\sum_{k=1}^{\infty}ka_{p+k}z^k=\left((A-B)(p-\beta)-\sum_{k=1}^{\infty}\big[B(p+k)+
(-pB+(B-A)(p-\beta))\big]a_{p+k}z^k\right)\phi(z).
$$
Rewriting it, we get
$$\sum_{k=1}^{\infty}ka_{p+k}z^k=\left((A-B)(p-\beta)+\sum_{k=1}^{\infty}\big[A(p-\beta)-B(k+p-\beta)\big]a_{p+k}z^k\right)\phi(z).
$$
By Clunie's method~\cite{Clu59}~(for instance see~\cite{Rog43,Rob70}) for $n\in \mathbb{N}$, we observe that
$$\sum_{k=1}^{n}k^2|a_{p+k}|^2\le (A-B)^2(p-\beta)^2+\sum_{k=1}^{n-1}\big[A(p-\beta)-B(k+p-\beta)\big]^2|a_{p+k}|^2.
$$
Simplification of the above inequality leads to
$$|a_{p+n}|^2\le \frac{1}{n^2}\left((A-B)^2(p-\beta)^2+\sum_{k=1}^{n-1}\Big(\big[A(p-\beta)
-B(k+p-\beta)\big]^2-k^2\Big)|a_{p+k}|^2\right)
$$
or
\begin{align*}
&|a_{p+n}|^2\le \frac{1}{n^2}\Bigg((A-B)^2(p-\beta)^2+\sum_{k=2}^{n}\Big(\big[A(p-\beta)
-B(k+p-\beta-1)\big]^2\\
&\hspace{9cm}-(k-1)^2\Big)|a_{p+k-1}|^2\Bigg).
\end{align*}
Above inequality can be rewritten by replacing $p+n$ by $n$ as
\begin{align}\label{eq7-thm1}
&|a_{n}|^2\le \frac{1}{(n-p)^2}\Bigg((A-B)^2(p-\beta)^2+\sum_{k=2}^{n-p}\Big(\big[A(p-\beta)
-B(k+p-\beta-1)\big]^2\\ \nonumber
&\hspace{9cm}-(k-1)^2\Big)|a_{p+k-1}|^2\Bigg)
\end{align}
for $n\ge p+1$.\\
Note that the terms under the summation in the right hand side of (\ref{eq7-thm1}) may be positive as well as negative.
We investigate it by including here a table (see Table~1) for values of $W:=\big(A(p-\beta)-B(k+p-\beta-1)\big)^2-(k-1)^2$ for various
choices of $A,B,k,\beta$ and $p$.
\begin{center}\
\begin{tabular}{|c|l|l|l|l|l|}
\hline
 \textbf{k} & $p$ & \textbf{A} &  \textbf{ B} & \textbf{$\beta$} & \textbf{W} \\
\hline
2&1&0.8  &0.5&0& -0.96 \\                                                   
\hline
2&1&-0.5  &-0.8& 0 & 0.21 \\ 
\hline
3&2&0.5  &0.4& 0.5 & -3.5775 \\ 
\hline
3&2&-0.1 &-0.7&0.5& 1.29 \\ 
\hline
\end{tabular}
\end{center}
\hspace{7.5cm} Table~1
\begin{center}
(\it This the place where the incorrectness of Aouf's proof is found!)
\end{center}
So, we can not apply direct mathematical induction in (\ref{eq7-thm1}) to establish the required bounds for $|a_n|$.
Therefore, we are considering different cases for this.

First, for $n=p+1$, we easily see that (\ref{eq7-thm1}) reduces to
$$|a_{p+1}|\le (A-B)(p-\beta)
$$
which establishes~(\ref{eq1-thm1}).

Secondly, $A(p-\beta)-B(p-\beta-1)\le 1$ if and only if $A(p-\beta)-B(n-\beta-1)\le (n-p-1)$ for $n\ge p+2$. Since all 
the terms under the summation in (\ref{eq7-thm1}) are non-positive, we reduce to
$$|a_n|\leq \frac{(A-B)(p-\beta)}{n-p}
$$
for $A(p-\beta)-B(p-\beta +1)\leq 1,\, n\geq p+2.$ This proves~(\ref{eq2-thm1}).
The equality holds in $(\ref{eq1-thm1})$ and $(\ref{eq2-thm1})$ for the functions
$$
k_{n,A,B,p}(z)= \left \{
\begin{array}{ll}
z^p\big(1+B\delta z^{n-1}\big)^{(A-B)(p-\beta)/(n-1)B}, &  B\neq 0;\\[1mm]
z^p~\rm{exp} \left(\frac{A(p-\beta)\delta z^{n-1}}{n-1}\right), &  B=0,
\end{array}\quad |\delta|=1.\right.
$$

Finally let us prove (\ref{eq3-thm1}) when $A(p-\beta)-B(n-\beta-1)>(n-p-1),\, n\geq p+2.$
We see that all the terms under the summation in (\ref{eq7-thm1}) are positive. We prove the inequality by the 
usual mathematical induction.
Fix $n,$ $n\ge p+2$ and suppose that (\ref{eq3-thm1}) holds for $k=3,4,\ldots,n-p.$ Then from (\ref{eq7-thm1}),
we find
\begin{align}\label{eq8-thm1}
&|a_{n}|^2\le \frac{1}{(n-p)^2}\left((A-B)^2(p-\beta)^2+\sum_{k=2}^{n-p}\Big(\big[A(p-\beta)
-B(k+p-\beta-1)\big]^2-(k-1)^2\Big)\right.\\\nonumber
&\hspace{8cm}\left. \prod_{j=1}^{k-1}\frac{\big[A(p-\beta)-B(p-\beta+j-1)\big]^2}{j^2}\right).
\end{align}
It is now enough to show that the square of the right hand side of (\ref{eq3-thm1}) is equal to the 
right hand side of (\ref{eq8-thm1}), that is
\begin{align}\label{eq9-thm1}
&\prod_{j=1}^{m-p}\frac{\big[A(p-\beta)-B(p-\beta+j-1)\big]^2}{j^2}=\frac{1}{(m-p)^2}\Bigg((A-B)^2(p-\beta)^2\\ \nonumber
& +\sum_{k=2}^{m-p}\Big(\big[A(p-\beta)-B(k+p-\beta-1)\big]^2-(k-1)^2\Big)
 \prod_{j=1}^{k-1}\frac{\big[A(p-\beta)-B(p-\beta+j-1)\big]^2}{j^2}\Bigg)
\end{align}
for $A(p-\beta)-B(m-\beta-1)>(m-p-1),\, m\geq p+2.$ We also use the induction principle to prove (\ref{eq9-thm1}).

The equation~(\ref{eq9-thm1}) is recognized for $m=p+2$. Suppose that (\ref{eq9-thm1}) is true for all $m,\, p+2<m\le n-p$.
Then from (\ref{eq8-thm1}), we obtain
\begin{align*}
&|a_{n}|^2\le \frac{1}{(n-p)^2}\Bigg((A-B)^2(p-\beta)^2+\sum_{k=2}^{n-p-1}\Big(\big[A(p-\beta)
-B(k+p-\beta-1)\big]^2-(k-1)^2\Big)\\\nonumber
&\hspace{3cm}\times \prod_{j=1}^{k-1}\frac{\big[A(p-\beta)-B(p-\beta+j-1)\big]^2}{j^2}
+\Big(\big[A(p-\beta)-B(n-\beta-1)\big]^2\\
& \hspace{4cm}	-(n-p-1)^2\Big)\times \prod_{j=1}^{n-p-1}\frac{\big[A(p-\beta)-B(p-\beta+j-1)\big]^2}{j^2}\Bigg).
\end{align*}
Using the induction hypothesis, for $m=n-1,$ we get
\begin{align*}
&|a_{n}|^2\le \frac{1}{(n-p)^2}\left( (n-p-1)^2\prod_{j=1}^{n-p-1}\frac{\big[A(p-\beta)-B(p-\beta+j-1)\big]^2}{j^2}\right.\\
&\hspace{.3cm}\left.+\Big(\big[A(p-\beta)-B(n-\beta-1)\big]^2-(n-p-1)^2\Big)
\prod_{j=1}^{n-p-1}\frac{\big[A(p-\beta)-B(p-\beta+j-1)\big]^2}{j^2}\right).
\end{align*}
Hence
$$|a_{n}|\le \prod_{j=1}^{n-p}\frac{\big[A(p-\beta)-B(p-\beta+j-1)\big]}{j}.
$$
It is easy to prove that the bounds are sharp for the function
$$k_{A,B,p}(z)= \left \{
\begin{array}{ll}
z^p\big(1+B\delta z\big)^{(A-B)(p-\beta)/B}, &  B\neq 0;\\[1mm]
z^pe^{A(p-\beta)z\delta}, &  B=0,
\end{array}\quad |\delta|=1.\right.
$$
This completes the proof of Theorem~\ref{thm1}.
\end{proof} 
We remark that, choosing $p=1$ and $\beta=0$ in Theorem~\ref{thm1} we turned into Lemma~\ref{lem1}.

\bigskip
\noindent
{\bf Acknowledgements.} 
The second author acknowledges the support of National Board for Higher Mathematics, Department of Atomic Energy,
India (grant no. 2/39(20)/2010-R$\&$D-II).

\end{document}